%
\documentclass[final]{siamltex}
\usepackage[english]{babel}
\usepackage{graphics}
\usepackage{graphicx}
\usepackage{amssymb}
\usepackage{arydshln}
\usepackage{amsmath}
\usepackage{amsfonts}
\usepackage{float}
\usepackage{multirow} 
\usepackage{booktabs} 
\usepackage{pstricks,pst-plot}
\usepackage{hyperref}
\usepackage{epstopdf}
\usepackage{subfigure}
\usepackage{dsfont} 
\usepackage{caption}
\usepackage{enumerate}
    \newtheorem{remark}[theorem]{Remark}

\newcommand{\be}{\begin{equation}}
\newcommand{\ee}{\end{equation}}
\newcommand{\ba}{\begin{array}}
\newcommand{\ea}{\end{array}}

\newcommand{\comment}[1]{}

 \newcommand{\B}[1]{\mbox{\boldmath $#1$}}

\newenvironment{code1}{%
                           \mathcode`\:="603A  
                           \def\colon{\mathchar"303A}
                           \par
                           \upshape
                           \begin{list} 
                              {} {\leftmargin = 0.0cm}
                           \item[]
                           \begin{tabbing}
                              \hspace*{.3in} \= \hspace*{.3in} \=
                              \hspace*{.3in} \= \hspace*{.3in} \=
                                                                                                                                                                                                     \hspace*{.3in} \= \hspace*{.3in} \= \kill
                          }{\end{tabbing}\end{list}}

\title{A CMV--based eigensolver for companion matrices}

\author{R. Bevilacqua\footnotemark[1]
\and G. M. Del Corso\footnotemark[1]
\and L. Gemignani\footnotemark[1]~\footnotemark[5]}

\begin{document}

\maketitle

\renewcommand{\thefootnote}{\fnsymbol{footnote}}

\footnotetext[1]{Dipartimento di Informatica, Universit\`a di Pisa,
Largo Bruno Pontecorvo 3, 56127 Pisa, Italy, \{bevilacq,delcorso,l.gemignani\}@di.unipi.it}
\footnotetext[5]{This work was partially supported by GNCS-INDAM.}

\begin{abstract}

In this paper we present a novel matrix method for polynomial rootfinding.   By exploiting the 
 properties  of the QR eigenvalue algorithm  applied to a suitable 
CMV--like  form of a companion matrix  we design a fast and computationally simple 
structured QR iteration. 
  
\noindent {\it AMS classification:} 65F15
\end{abstract}

\begin{keywords}
CMV--like matrix, companion matrix, QR eigenvalue algorithm, rank structure.
\end{keywords}

\section{Introduction}
\setcounter{equation}{0}

This paper  stems from two research lines  which blend in the effective solution  of certain 
 eigenproblems for companion--like matrices arising in polynomial rootfinding.  The first one 
begins with the exploitation of the structure of companion-like matrices under the 
QR eigenvalue algorithm. In the recent years based on the concept of rank structure 
many authors have provided fast 
adaptations of the QR iteration applied to small rank modifications of Hermitian  or 
unitary matrices. However, despite the common framework, there are several 
significant differences 
between the Hermitian and the unitary case which makes the latter  much more involved computationally.
The second line originates from the treatment of the unitary eigenproblem. It has been observed 
in the seminal
 paper 
\cite{BE91} that the 
CMV-like banded form of a unitary matrix rather than its Hessenberg reduction leads to a QR--type 
algorithm which is 
ideally close to the Hermitian tridiagonal QR algorithm as it maintains the band shape  of the initial 
matrix at any step.  The  present work  lies at the intersection of these two strands and 
is specifically aimed to incorporate  
the CMV technology for the unitary eigenproblem 
in the design of   fast QR--based eigensolvers  for companion--like matrices. 

The first fast  structured variant of the QR iteration for companion matrices was proposed  
in \cite{BDG}. The invariance of the rank properties of the matrices generated by the QR scheme  is 
captured by means of three rank--one matrices which are  easily updated under the iterative process.
Since the representation breaks down for reducible Hessenberg matrices  the price paid to keep 
the algorithm simple 
is a progressive deterioration  in the limit of the accuracy of computed eigenvalues. 
Overcoming this drawback  is the 
main subject of many subsequents papers \cite{BBEGG,BEGG_simax,BEGG_comp,CG,VVVF}, where 
more refined  parametrizations of the rank structure are employed. While this leads to
numerically stable methods, it also opens the way to involved algorithms which 
exhibit worse timing performance and are difficult to generalize  to the block  matrix/pencil case.
This is astonishingly unpleasant when compared with the  simplicity and the effectiveness 
of adjusting the QR  scheme 
for perturbed Hermitian matrices \cite{EGGnew,VDC}.

The approach pursued here moves  away from the classical scenario where nonsymmetric 
matrices are 
converted in Hessenberg form for eigenvalue computation, 
focusing instead  on a preliminary reduction of a 
companion matrix $A\in \mathbb C^{n\times n}$  into a  different staircase form.  More specifically,  
recall that 
 $A\in \mathbb C^{n\times n}$ 
can be  expressed as a rank--one correction of a  unitary matrix $U$ 
generating the circulant matrix algebra.  The transformation of $U$ by unitary congruence 
into a CMV--like form \cite{CMV,KN}  induces a corresponding reduction of the matrix $A$ into an  
upper block Hessenberg form with a certain specified staircase pattern. The CMV-like form of a unitary matrix is 
particularly suited for the application of the QR eigenvalue algorithm \cite{BE91}. 
The staircase shape also reveals invariance properties under the same algorithm \cite{AG}. 

From 
these properties it follows that the matrices generated by the shifted QR method applied to the transformed 
companion matrix  inherit a simplified rank structure which can be expressed in terms of two rank--one matrices. 
This yields a data sparse parametrization of each matrix
which 
at the same time is able to capture the
structural properties of the matrix and yet  to be very  easy to manipulate and update
 for computations.  We shall develop a fast adaptation of the 
QR eigenvalue algorithm for companion matrices that exploits this parametrization and requires 
$O(n)$ arithmetic operations per step.  The main complexity  of the algorithm
lies in updating the narrow  diagonal staircase  of each matrix. The  results from 
numerical experiments indicate that the proposed approach works stable and efficient.

The paper is organized as follows. 
In Section 2,  we first recall some preliminaries about CMV--like representations  of unitary matrices and then 
introduce  the considered reduction of a companion matrix. The structural properties of the modified matrix 
under the shifted QR iteration are analyzed in Section 3.  In Section 4 we  present our  fast 
adaptation of the shifted 
QR algorithm for companion matrices and report the results of numerical experiments. 
Finally, in Section 5
 the conclusion and  further developments are drawn.

\section{Preliminaries}
\setcounter{equation}{0}

For  a given  pair $(\gamma, k)\in \mathbb D\times \mathbb I_n$, $\mathbb D=\{z\in \mathbb C\colon |z|< 1\}$, 
$\mathbb I_n=\{1,2,\ldots,n-1\}$,  we set 
\[
\mathcal G_k(\gamma)=I_{k-1}\oplus 
\left[\begin{array}{cc} \bar\gamma & \sigma \\
\sigma & -\gamma \end{array}\right] \oplus I_{n-k-1} \in \mathbb C^{n\times n}, 
\]
where $\sigma\in \mathbb R, \sigma> 0$ and $|\gamma|^2+\sigma^2=1$.  Similarly, if $\gamma \in \mathbb S^{1}= 
\{z\in \mathbb C\colon |z|= 1\}$ then denote
\[
\mathcal G_n(\gamma)=I_{n-1}\oplus 
\gamma \in \mathbb C^{n\times n}.
\]
Observe that $\mathcal G_k(\gamma)$, $1\leq k\leq n$,  is a unitary matrix. 
Given coefficients $\gamma_1, \ldots, \gamma_{n-1}\in \mathbb D$ and $\gamma_n\in \mathbb S^{1}$ we introduce the 
unitary block diagonal matrices 
\[
\mathcal L=\mathcal G_1(\gamma_1)\cdot\mathcal G_3(\gamma_3) \cdots \mathcal 
G_{2 \lfloor \frac{n+1}{2}\rfloor-1}(\gamma_{2 \lfloor \frac{n+1}{2}\rfloor-1}), 
\quad \mathcal M=\mathcal G_2(\gamma_2)\cdot\mathcal G_4(\gamma_4) \cdots \mathcal 
G_{2 \lfloor \frac{n}{2}\rfloor}(\gamma_{2 \lfloor \frac{n}{2}\rfloor}), 
\]
and define 
\begin{equation}\label{Schur}
\mathcal C=\mathcal L\cdot \mathcal M
\end{equation}
 as the CMV matrix associated with the 
prescribed coefficient list \cite{CMV}. 
The decomposition  \eqref{Schur} of a unitary matrix  was first 
investigated for eigenvalue computation in  \cite{BE91}. The staircase 
shape of CMV matrices is analyzed in \cite{KN} where the next  definition is given. 
A matrix $A\in \mathbb C^{n\times n}$  has CMV shape if  the possibly 
nonzero entries  exhibit  the following pattern where $+$ denotes a positive entry:
\[
A=\left[\begin{array}{cccccccc}
\star & \star & +\\
+ & \star & \star \\
& \star & \star & \star & +\\
& + & \star & \star & \star\\
& & &  \star & \star & \star & +\\
&  & & + & \star & \star & \star\\
& & &  & &  \star & \star & \star \\
&  & &  & & + & \star & \star \\
\end{array}\right], \quad \quad  (n=2k),
\]
or 
\[
A=\left[\begin{array}{ccccccc}
\star & \star & +\\
+ & \star & \star \\
& \star & \star & \star & +\\
& + & \star & \star & \star\\
& & &  \star & \star & \star & +\\
&  & & + & \star & \star & \star\\
& & &  &  & \star & \star \\
\end{array}\right], \quad  \quad (n=2k-1).
\]
Obviously, CMV matrices have a CMV shape and, conversely, a unitary matrix 
with CMV shape is CMV \cite{CMV1}.  By skipping the positivity condition 
 in \cite{BDG1} the fairly more general 
class of CMV--like shaped 
matrices is considered.  There  it is shown that the block Lanczos method can be used to 
reduce a unitary matrix into the direct sum of CMV--like shaped 
matrices.

Staircase matrix patterns can be exploited for eigenvalue computation \cite{AG}.  The shifted QR algorithm 
\begin{equation}\label{qrgeneral}
\left\{ \begin{array}{l l}
   A_s-\rho_s I_n=Q_sR_s \\
   A_{s+1}=Q_s^H  A_s Q_s,\quad s\geq 0,
   \end{array}  \right.
\end{equation}
is the standard algorithm for computing the Schur form of a general matrix $A=A_0\in \mathbb C^{n\times n}$ \cite{MC}.
The matrix $A$ is said to be staircase if $m_j(A)\geq m_{j-1}(A)$, $2\leq j\leq n$, where 
\[
m_j(A)=\max\{j, \max_{i>j}\{i\colon a_{i,j}\neq 0\}\}.
\]
The staircase form is  preserved under the QR iteration \eqref{qrgeneral} in the sense that \cite{AG}
\[
m_j(A_{s+1})\leq m_j(A_{s}), \quad 1\leq j\leq n.
\]
For Hermitian and unitary matrices the staircase form also implies a zero pattern or a rank structure in the upper triangular 
part.  The invariance  of this pattern by the QR algorithm 
 is proved in \cite{AG} for Hermitian matrices and in \cite {BE91} for   unitary CMV--shaped 
matrices. An alternative proof for the unitary case that is suitable for generalizations is given 
in \cite{BDG1} by relying upon the classical nullity theorem \cite{FM}.

\begin{theorem}\label{nullycons}
Suppose $A\in \mathbb C^{n\times n}$ is a nonsingular matrix and $\B \alpha$ and $\B \beta$  to be 
nonempty proper subsets of $\mathbb I_{n+1}\colon =\{1,\ldots, n\}$.  Then 
\[
\rank(A^{-1}(\B \alpha; \B \beta))=\rank(A(\mathbb I_{n+1}\setminus \B \beta; \mathbb I_{n+1}\setminus \B \alpha))
 + |\B \alpha| +|\B \beta| -n, 
\]
where, as usual, $|J|$ denotes the cardinality of the set $J$. 
\end{theorem}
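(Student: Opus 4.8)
The plan is to reduce to corner blocks by a row/column permutation and then to read off both ranks as dimensions of images of explicit linear maps, so that two applications of the rank--nullity theorem close the argument, with the rectangular or rank-deficient cases of the complementary block requiring no special handling.

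First I would note that the claimed identity is covariant under permutations: if $P,Q$ are permutation matrices then $(PAQ)^{-1}=Q^{T}A^{-1}P^{T}$, so passing from $A$ to $PAQ$ only relabels the rows and columns of $A$ and of $A^{-1}$ and changes neither of the two ranks in the statement. Choosing $P$ and $Q$ so that the rows indexed by $\mathbb I_{n+1}\setminus\B\beta$ and the columns indexed by $\mathbb I_{n+1}\setminus\B\alpha$ are moved to the trailing positions, I may therefore assume $\B\alpha=\{1,\dots,p\}$ and $\B\beta=\{1,\dots,q\}$, with $p=|\B\alpha|$ and $q=|\B\beta|$. Then $B:=A^{-1}(\B\alpha;\B\beta)$ is the leading $p\times q$ block of $A^{-1}$, whereas $A(\mathbb I_{n+1}\setminus\B\beta;\mathbb I_{n+1}\setminus\B\alpha)$ is the trailing $(n-q)\times(n-p)$ block $A_{22}$ of $A$ in the partition with row split $q+(n-q)$ and column split $p+(n-p)$.

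Next I would compute $\rank(B)$ by subspace chasing. Let $e_1,\dots,e_n$ be the canonical basis of $\C^{n}$, let $W$ be the span of $e_1,\dots,e_q$, and let $\Pi$ be the projection that keeps the first $p$ coordinates and zeroes the others, so $V:=\ker\Pi$ is the span of $e_{p+1},\dots,e_n$. Since the $j$-th column of $B$ (for $j\le q$) consists of the first $p$ entries of $A^{-1}e_j$, the column space of $B$ is $\Pi(A^{-1}W)$; as $A$ is nonsingular, $\dim(A^{-1}W)=q$, and rank--nullity for $\Pi$ restricted to $A^{-1}W$ gives $\rank(B)=q-\dim(A^{-1}W\cap V)$. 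Now $A^{-1}W\cap V=\{v\in V:Av\in W\}$ is the kernel of the map $V\to\C^{n}/W$, $v\mapsto Av+W$, whose image is $(AV+W)/W$, so $\dim(A^{-1}W\cap V)=(n-p)-\bigl(\dim(AV+W)-q\bigr)$ and hence $\rank(B)=\dim(AV+W)-n+p$.

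Finally I would evaluate $\dim(AV+W)$: projecting $\C^{n}$ onto its last $n-q$ coordinates is a map with kernel exactly $W$, so it identifies $\dim(AV+W)-q$ with the dimension of the image of $AV$; but $AV$ is the span of the last $n-p$ columns of $A$, so that image is the column space of $A_{22}$, giving $\dim(AV+W)=q+\rank(A_{22})$. Substituting, $\rank(B)=\rank(A_{22})+p+q-n$, which is the assertion. The argument needs no deep ingredient and no Schur-complement or perturbation device; the one place to be careful is the bookkeeping -- getting the transpose in $(PAQ)^{-1}=Q^{T}A^{-1}P^{T}$ right so the permutation normalization is legitimate, and keeping the auxiliary subspaces $W$ and $V$ together with the two quotient/projection passages straight so the dimension counts telescope correctly.
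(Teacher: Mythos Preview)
Your argument is correct. The permutation reduction to corner blocks is legitimate, the two rank--nullity passages are set up and counted correctly, and the final identification of the projected image of $AV$ with the column space of $A_{22}$ yields the formula $\rank(B)=\rank(A_{22})+p+q-n$ as claimed.

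As for comparison with the paper: the paper does \emph{not} prove this theorem. It is stated there as the classical nullity theorem, attributed to Fiedler and Markham~\cite{FM}, and is used as a black box in the proof of Lemma~\ref{0shift}. So there is no ``paper's own proof'' to compare against; your self-contained linear-algebra argument simply supplies what the paper cites from the literature.
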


The design of efficient numerical methods for eigenvalue computation of almost Hermitian and unitary 
matrices has recently attracted a lot of attention (see \cite{Yuli_book,GU,Raf_book} and the references given therein).
A motivating  application is given by matrix methods 
for polynomial rootfinding.  From a given $n-$th degree polynomial 
\[
p(z)=p_0+p_1z +\ldots + p_nz^n, \quad (p_n\neq 0),
\]
we can set up the associated companion matrix $C\in \mathbb C^{n\times n}$ in upper Hessenberg form, 
\[
C=C(p)=\left[\begin{array}{cccc}
-\displaystyle\frac{p_{n-1}}{p_n} & -\displaystyle\frac{p_{n-2}}{p_n}&\ldots &-\displaystyle\frac{p_{0}}{p_n}\\
1  & 0 & \ldots & 0\\
&  \ddots & \ddots & \vdots\\
&  & 1 & 0
\end{array}\right].
\]
As $p_n \det(z I -C)= p(z)$ is satisfied,  thus we can  obtain approximations of  the zeros of $p(z)$
by applying a standard eigenvalue  method to the associated companion matrix $C$. This is exactly 
the approach taken by the MATLAB function {\tt roots}. 

In the recent years many fast adaptations of the QR iteration \eqref{qrgeneral} applied to an initial 
  companion matrix $A_0=C$ 
have been proposed \cite{BBEGG,BEGG_simax,BEGG_comp,CG,VVVF}, based on  the decomposition of $C$ as a rank--one 
correction of a  unitary matrix, that is, 
\[
C=U-\B e_1\B p^H= 
\left[\begin{array}{cccc}
0 & \ldots&0 &1\\
1  & 0 & \ldots & 0\\
&  \ddots & \ddots & \vdots\\
&  & 1 & 0
\end{array}\right] - \left[\begin{array}{c}
1\\0\\\vdots\\0
\end{array}\right]\left[\displaystyle\frac{p_{n-1}}{p_n}, 
\displaystyle\frac{p_{n-2}}{p_n},\ldots,\displaystyle\frac{p_{0}}{p_n}+1\right].
\]
In this paper we further elaborate on this decomposition  by developing a different  structured representation. 
Let  $P \in \mathbb  R^{n\times n}$,  $P=(\delta_{i, \pi(j)})$  be the permutation matrix associated with the
permutation given by
\[
\pi \ : \
\mathbb I_{n+1} \rightarrow \mathbb I_{n+1},\quad \pi(1)=1; \ 
\pi(j)=\left\{%
\begin{array}{ll}
    k+1, & \quad \hbox{\ \ if $j=2k$;} \\
    \\
    n-k+1, & \quad \hbox{\ \ if $j=2k+1$. } \\
\end{array}%
\right. 
\]
Then it can be easily verified that the matrix $\widehat U=P^T \cdot U\cdot P$ is a CMV--like shaped 
matrix.  Indeed, we have that the nonzero entries of $\widehat U$ are  precisely $(2,1)$, $(n-1,n)$ and 
those of the form $(2j-1,2j+1)$ and $(2j, 2j+2)$ for $j\geq 1$.  For instance in  the case $n=8$ the nonzero pattern 
looks as follows:
\[
\widehat U=\left[\begin{array}{cccccccc}
 &  & 1\\
1 &  &  \\
&  &  &  & 1\\
& 1 &  &  & \\
& & &   &  &  & 1\\
&  & & 1 &  &  & \\
& & &  & &   &  &1  \\
&  & &  & & 1 &  &  \\
\end{array}\right].
\]
Moreover, since $P^T\B e_1=\B e_1$  it follows that, denoting by $\B{\widehat{p}}=P^T{\B p}$ we have 
\begin{equation}\label{ones}
\widehat C=P^T \cdot C\cdot P=\widehat U -\B e_1\,\B{\widehat{p}}^H,
\end{equation}
is a rank--one correction of a  unitary CMV--like shaped  
matrix in staircase form.  In the next section we investigate the properties of the shifted QR  iteration 
\eqref{qrgeneral} applied to $A_0=\widehat C$ for the computation of the zeros of $p(z)$.

\section{Structural Properties under the QR iteration}
\setcounter{equation}{0}

To put our derivation
 on a firm theoretical ground,  in this section we perform a thorough analysis of the structural properties of 
$A_0=\widehat C$ which are maintained under the shifted QR iteration \eqref{qrgeneral}. 

\begin{remark}\label{rem1}
Several different properties can easily be 
checked by assuming that the matrix $A_s-\sigma_s I_n$  in \eqref{qrgeneral} and, hence, a fortiori $R_s$ 
is invertible.  Clearly,  this might  not   always be  the case but, however,  it is well known that the one--parameter 
matrix function $A_s-\lambda I_n$ is analytic in $\lambda$ and  an analytic QR 
decomposition $A_s-\lambda I_n=Q_s(\lambda) R_s(\lambda)$ of this analytic matrix function exists \cite{DiEi}.
For any given fixed initial pair $(Q_s(\sigma_s),  R_s(\sigma_s))$ we can find a branch of the 
analytic QR decomposition of  $A_s-\lambda I_n$  that passes through $(Q_s(\sigma_s),  R_s(\sigma_s))$. 
Following this path  it makes it possible to extend the proof of the properties that are 
closed in the limit. This is for instance the case of the rank properties.
\end{remark}

It has already been  noticed above that the staircase form of $A_0=C$ is preserved under the shifted QR iteration 
\eqref{qrgeneral}. This means that each  unitary matrix $Q_s$ is also in staircase form.  In particular, 
if
 \[
\mathcal G_k(\gamma, \sigma)=I_{k-1}\oplus 
\left[\begin{array}{cc} \bar\gamma & \sigma \\
\bar \sigma& -\gamma \end{array}\right] \oplus I_{n-k-1} \in \mathbb C^{n\times n}, \quad 1\leq k\leq n-1,
\]
where $\gamma, \sigma\in \mathbb D \cup \mathbb S^{1}$ and $|\gamma|^2+|\sigma|^2=1$, 
denote  generalized Givens  reflectors  then the matrix $Q=Q_s$ can be expressed as 
\begin{equation}\label{qrep}
Q=\mathcal G_1(\bar \gamma_1, \bar \sigma_1)\cdot \mathcal G_{2,3} \cdot  \mathcal G_{4,5} \cdots \mathcal 
G_{2(\lfloor \frac{n+1}{2}\rfloor-2),2(\lfloor \frac{n+1}{2}\rfloor-2)+1 }\cdot  \mathcal G_{n-1}, 
\end{equation}
where 
\[
\mathcal G_{\ell, \ell+1} =\mathcal G_{\ell+1}(\bar \gamma_{\ell+1,1}, \bar \sigma_{\ell+1,1}) \cdot 
\mathcal G_\ell(\bar \gamma_{\ell}, \bar \sigma_{\ell}) \cdot 
\mathcal G_{\ell+1}(\bar \gamma_{\ell+1,2}, \bar \sigma_{\ell+1,2}), 
\]
and 
\[
 \mathcal G_{n-1}=\mathcal G_{n-1}(\bar \gamma_{n-1,1}, \bar \sigma_{n-1,1}) \cdot 
\mathcal G_{n-2}(\bar \gamma_{n-2}, \bar \sigma_{n-2}) \cdot 
\mathcal G_{n-1}(\bar \gamma_{n-1,2}, \bar \sigma_{n-1,2})
\]
 for an even $n$ and 
\[
 \mathcal G_{n-1}=\mathcal G_{n-1}(\bar \gamma_{n-1,1}, \bar \sigma_{n-1,1})
\]
if, otherwise, $n$ is odd.

Since from \eqref{ones} 
\[
A_0=\widehat U-\B e_1\,\B{\widehat{p}}^H=U_0 -\B z_0\B w_0^H
\]
we find that 
\begin{equation}\label{base0}
A_{s+1}=Q_s^H A_s Q_s=Q_s^H(U_s -\B z_s\B w_s^H)Q_s=U_{s+1}-\B z_{s+1}\B w_{s+1}^H, \quad s\geq 0, 
\end{equation}
where 
\begin{equation}\label{base1}
U_{s+1}\colon=Q_s^HU_s Q_s, \quad  \B z_{s+1}\colon =Q_s^H\B z_s, \quad  \B w_{s+1}\colon=Q_s^H\B w_s.
\end{equation}
Theorem~\eqref{1theo} describes the structure of the unitary matrix $U_s$, for any $s\ge 0$. We need the following result characterizing the structure of the $Q$ factor appearing in the $QR$ factorization of $A_s$, $s\ge 0$.

\begin{lemma}\label{0shift}
The unitary factor $Q$ generated by means of a QR factorization of 
$A_s$, $s\geq 0$,  has both a lower and upper staircase profile. Specifically, it holds 
\[
Q(1:2j, 2(j+1)+1:n)=0, \quad  1\leq j\leq \lfloor \frac{n+1}{2}\rfloor-2.
\]
\end{lemma}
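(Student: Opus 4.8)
The plan is to exploit two facts in tandem: the explicit factored form of $Q$ in \eqref{qrep}, and the invariance of the staircase profile under the QR iteration. First I would recall that, since $A_0=\widehat C$ is in staircase form and this form is preserved by \eqref{qrgeneral}, each matrix $A_s$ is in staircase form; hence $A_s-\rho_s I_n$ is staircase, and so is its $Q$ factor. In particular $Q=Q_s$ admits the representation \eqref{qrep} as a product of $\mathcal G_{\ell,\ell+1}$ blocks, each of which acts nontrivially only on the coordinates $\ell$ and $\ell+1$ (the middle factor $\mathcal G_\ell$) together with $\ell+1$ (the two outer factors $\mathcal G_{\ell+1}$). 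So the support of $Q$ is confined to a narrow band around the diagonal, plus the superdiagonal entries coming from the overlapping of consecutive blocks.

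The key step is to read off from \eqref{qrep} exactly which entries in the upper right corner are forced to vanish. The block $\mathcal G_{2k,2k+1}$ touches only rows and columns in $\{2k,2k+1,2k+2\}$ (because $\mathcal G_{2k+1}(\cdot,\cdot)$ acts on indices $2k+1,2k+2$), while $\mathcal G_1(\bar\gamma_1,\bar\sigma_1)$ touches $\{1,2\}$. Therefore, to produce a nonzero entry in position $(i,m)$ with $i\le 2j$, one needs a chain of overlapping blocks connecting column $m$ down to row $i$; but the blocks $\mathcal G_{2j,2j+1},\mathcal G_{2j+2,2j+3},\dots$ acting on the columns $\ge 2(j+1)+1$ cannot reach any row $\le 2j$, since the lowest block that acts on row $2j$ is $\mathcal G_{2j-2,2j-1}$ together with $\mathcal G_{2j,2j+1}$, and the latter's column range starts at $2j$. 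Carrying this bookkeeping through — tracking the column index of the leftmost block that can influence a given row — yields precisely $Q(1:2j,\,2(j+1)+1:n)=0$ for $1\le j\le\lfloor\frac{n+1}{2}\rfloor-2$.

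Concretely, I would argue by showing that the product $Q$ is block upper Hessenberg with $2\times 2$ blocks (up to the parity adjustments at the last block) of a specific staircase type: if we group the indices as $\{1,2\},\{3,4\},\dots$, then column block $j+1$ can only feed into row blocks $\le j+1$ from above and row block $j+1$ from the diagonal; the additional superdiagonal $+$ entries of the CMV pattern account for the odd single-entry overlaps, but never reach two full blocks up. An alternative and perhaps cleaner route is the nullity theorem (Theorem~\ref{nullycons}): the zero pattern $Q(1:2j,2(j+1)+1:n)=0$ is equivalent to a rank condition on a complementary submatrix of $Q^{-1}=Q^H$, which by unitarity is again a submatrix of $Q$ with the same staircase bound; this self-dual structure is exactly why $Q$ has \emph{both} a lower and an upper staircase profile, and the lower profile is immediate from $A_s$ being staircase.

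The main obstacle I anticipate is purely combinatorial: correctly handling the parity cases ($n$ even versus odd) and the degenerate final block $\mathcal G_{n-1}$, and making sure the index ranges in \eqref{qrep} line up so that the claimed bound $1\le j\le\lfloor\frac{n+1}{2}\rfloor-2$ is tight and not off by one. Once the support of each elementary factor $\mathcal G_{\ell,\ell+1}$ is pinned down, the vanishing of the stated corner block follows by an induction on the number of factors in the product, composing supports one block at a time; no numerical or analytic subtlety is involved, so Remark~\ref{rem1} is not even needed here.
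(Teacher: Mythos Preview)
Your main line of attack---reading the vanishing of $Q(1{:}2j,\,2(j{+}1){+}1{:}n)$ directly off the factored form \eqref{qrep} by tracking supports---does not work. A generic product of that shape is \emph{not} upper staircase. Take $n=8$ and $j=1$: the block $\mathcal G_{4,5}$ acts on indices $\{4,5,6\}$, so $\mathcal G_{4,5}e_5$ has a component on $e_4$; then $\mathcal G_{2,3}$ (acting on $\{2,3,4\}$) spreads that into $e_2$; finally $\mathcal G_1$ sends it up to $e_1$. Hence $Q(1,5)$ and $Q(2,5)$ are, for generic parameters, nonzero. The factorization \eqref{qrep} encodes only the \emph{lower} staircase of $Q$; the upper one is a genuinely extra constraint that requires further input from the structure of $A_s$, not just its staircase shape.

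The missing ingredient is precisely what the paper uses: the $2\times 2$ subdiagonal blocks $A_s(2j{+}1{:}2(j{+}1),\,2j{:}2j{+}1)$ have rank at most one (this holds for $A_0=\widehat C$ because $\widehat U$ has a single nonzero in each such block and the rank--one correction affects only row~1), and this rank bound is inherited by $Q$ and is invariant under the iteration via the closure argument of Remark~\ref{rem1}. Combining that rank--one condition with the lower staircase of $Q$ gives $\rank\bigl(Q(2j{+}1{:}n,\,1{:}2(j{+}1))\bigr)=2$, and then the nullity theorem (Theorem~\ref{nullycons}) applied to the unitary $Q$ yields $\rank\bigl(Q(1{:}2j,\,2(j{+}1){+}1{:}n)\bigr)=0$. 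You mention the nullity theorem only as an afterthought, without identifying the rank--one property that feeds it, and you explicitly claim Remark~\ref{rem1} is unnecessary---but it is exactly what guarantees the rank--one constraint persists for all $s\ge 0$.
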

\begin{proof}
It has already been observed that since the staircase form of $A_0$ is preserved under the shifted $QR$ iteration~\eqref{qrgeneral}, the unitary factor $Q$, corresponding to the unitary matrix involved in a $QR$ iteration without shift has a lower staircase profile. To prove that $Q$ has also an upper staircase profile,
observe that the matrix $A_0$ is such that 
$\rank(A_0(2j+1:2(j+1), 2j:2j+1))=1$, $1\leq j\leq \lfloor \frac{n+1}{2}\rfloor-1$.  From the argument stated in 
Remark \ref{rem1}  it follows that this 
 rank constraint  is preserved 
under the QR iteration and, specifically, we have 
$\rank(A_s(2j+1:2(j+1), 2j:2j+1))=1$, $1\leq j\leq \lfloor \frac{n+1}{2}\rfloor-1$, for any $s\geq 0$. 
The  same property is also inherited from the unitary factor $Q=Q_s$ generated by means of the 
QR factorization of $A_s$, i.e., $A_s=Q R$.  From Theorem \ref{nullycons} we obtain that 
\[
\begin{array}{ll}
\rank(Q(1:2j, 2(j+1)+1:n))=\rank(Q^H( 2(j+1)+1:n,1:2j))=
\\
\rank(Q(2j+1:n, 1:2(j+1))+(n-2)-n=\rank(Q(2j+1:n, 1:2(j+1))-2.
\end{array}
\]
Hence, by combining the constraint  $\rank(Q(2j+1:2(j+1), 2j:2j+1))=1$ with the 
staircase shape of $Q$ one deduces  that $\rank(Q(2j+1:n, 1:2(j+1))=2$ which implies 
\[
\rank(Q(1:2j, 2(j+1)+1:n))=0, \quad 1\leq j\leq \lfloor \frac{n+1}{2}\rfloor-2.
\]
Equivalently, the relation says that $Q(1:2j, 2(j+1)+1:n)$ is a zero matrix and this concludes the proof.
\end{proof}

Lemma \ref{0shift} can be used to exploit the rank properties of the unitary  matrices $U_s$, $s\geq 0$.
\begin{theorem}\label{1theo}
We have 
\[
\rank(U_s(1:2j, 2(j+1)+1:n))\leq 1, \quad 1\leq j\leq \lfloor \frac{n+1}{2}\rfloor-2, \ s\geq 0.
\]
Moreover, if $A_0$ is invertible then 
\[
U_s(1:2j, 2(j+1)+1:n)=B_s(1:2j, 2(j+1)+1:n),  \quad 1\leq j\leq \lfloor \frac{n+1}{2}\rfloor-2, \ s\geq 0, 
\]
where  
\begin{equation}\label{base2}
B_s=\frac{U_s\B w_s \B z_s^HU_s }{\B z_s^H U_s^H \B w_s-1}=Q_s^H B_{s-1} Q_s, \quad s\geq 1,
\end{equation}
is a rank one matrix. 
\end{theorem}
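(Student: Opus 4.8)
The plan is to split the statement into the rank bound, which holds unconditionally, and the explicit identification of the block via the rank-one matrix $B_s$, which is proved under the invertibility hypothesis.

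First I would establish the rank bound $\rank(U_s(1:2j, 2(j+1)+1:n))\le 1$. The starting point is the decomposition $A_s = U_s - \B z_s\B w_s^H$ from \eqref{base0}. Restricting to the index block $(1:2j,\;2(j+1)+1:n)$ and using that $\B z_s\B w_s^H$ is rank one, we get
\[
U_s(1:2j, 2(j+1)+1:n) = A_s(1:2j, 2(j+1)+1:n) + (\B z_s\B w_s^H)(1:2j, 2(j+1)+1:n).
\]
The second summand is rank at most $1$, so it suffices to show the first summand vanishes. But $A_s$ has the staircase shape of $A_0=\widehat C$ (its staircase profile is preserved by \eqref{qrgeneral} as recalled in the excerpt), and the block $(1:2j,\,2(j+1)+1:n)$ lies strictly above the staircase; so $A_s(1:2j, 2(j+1)+1:n)=0$, giving the bound. (If one prefers to argue directly from $U_s=Q_{s-1}^HU_{s-1}Q_{s-1}$, one can instead use Lemma~\ref{0shift}: writing $U_s = Q_{s-1}^H U_{s-1} Q_{s-1}$ and pushing the zero blocks of the $Q$-factors through, but the staircase argument on $A_s$ is the most direct route.)

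Next, under the assumption that $A_0$ is invertible, I would identify which rank-one matrix realizes the block. Since $A_0=\widehat C=P^T C P$ is invertible iff $p_0\ne 0$, and invertibility of $A_0$ forces invertibility of every $A_s=Q_{s-1}^H A_{s-1}Q_{s-1}$, we may use the Sherman--Morrison identity: from $A_s = U_s(I - U_s^H\B z_s\B w_s^H)$ and $A_s$ invertible, the factor $I - U_s^H\B z_s\B w_s^H$ is invertible, so $\B z_s^H U_s^H\B w_s \ne 1$, and
\[
A_s^{-1} = \Bigl(I - U_s^H\B z_s\B w_s^H\Bigr)^{-1} U_s^H
= \Bigl(U_s^H + \frac{U_s^H\B z_s\B w_s^H U_s^H}{1 - \B z_s^H U_s^H\B w_s}\Bigr)U_s^H \cdot(\text{rearranged})
\]
— more cleanly, $A_s^{-1} = U_s^H + \dfrac{U_s^H\B z_s\,\B w_s^H U_s^H}{\B z_s^H U_s^H\B w_s - 1}\,$wait; the cleanest form to use is simply $U_s = A_s + \B z_s\B w_s^H$ together with $U_s^H = A_s^{-1}$ being ruled out — rather, invert: $A_s^{-1} = (U_s - \B z_s\B w_s^H)^{-1} = U_s^H + \dfrac{U_s^H\B z_s\B w_s^H U_s^H}{\B z_s^H U_s^H \B w_s - 1}$, so that $U_s^H = A_s^{-1} + B_s^H$ with $B_s$ exactly the rank-one matrix in \eqref{base2}. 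Now I apply Theorem~\ref{nullycons} (the nullity theorem) to $A_s$ with the complementary index sets corresponding to the block of interest: the theorem converts a zero block of $A_s^{-1}$ (or a rank statement about it) in the complementary position into a rank statement about $A_s$, and vice versa. The staircase shape of $A_s$ gives the needed input on the $A_s$ side; transporting it through the nullity theorem shows that the corresponding complementary block of $A_s^{-1}$ has rank $0$, i.e. $A_s^{-1}(\cdot;\cdot)=0$ there. Taking conjugate transposes, the matching block of $U_s^H = A_s^{-1}+B_s^H$ reduces to the block of $B_s^H$; equivalently $U_s(1:2j, 2(j+1)+1:n) = B_s(1:2j, 2(j+1)+1:n)$, which is the claim. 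Finally, the recursion $B_s = Q_{s-1}^H B_{s-1}Q_{s-1}$ is immediate from \eqref{base1}: substitute $U_s=Q_{s-1}^HU_{s-1}Q_{s-1}$, $\B z_s=Q_{s-1}^H\B z_{s-1}$, $\B w_s=Q_{s-1}^H\B w_{s-1}$ into the defining formula for $B_s$ and note the scalar denominator $\B z_s^HU_s^H\B w_s - 1$ is unchanged by the unitary conjugation.

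The main obstacle is bookkeeping in the nullity-theorem step: one must choose the index subsets $\B\alpha,\B\beta\subset\mathbb I_{n+1}$ so that $\mathbb I_{n+1}\setminus\B\beta$ and $\mathbb I_{n+1}\setminus\B\alpha$ pick out exactly a block of $A_s$ that the staircase shape forces to have the right (small) rank, while $(\B\alpha;\B\beta)$ picks out the $(1:2j,\,2(j+1)+1:n)$ block of $A_s^{-1}$; the cardinality correction term $|\B\alpha|+|\B\beta|-n$ then has to come out to give rank $0$. This is precisely the kind of index manipulation already carried out in the proof of Lemma~\ref{0shift} (there applied to the $Q$-factor), and the same pattern applies here with $A_s$ in place of $Q$. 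One also needs the standing-limit caveat of Remark~\ref{rem1} for the case where $A_s-\sigma_s I$ happens to be singular, so that the rank identities, being closed conditions, extend along the analytic QR branch; and one should record that $A_0$ invertible propagates to all $A_s$ since each $Q_s$ is unitary.
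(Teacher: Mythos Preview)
Your first argument for the rank bound contains a genuine error. You claim that $A_s(1{:}2j,\,2(j+1)+1{:}n)=0$ because ``the block lies strictly above the staircase,'' but the staircase profile $m_j(A_s)$ only constrains the \emph{lower} part of $A_s$ (how far down the nonzeros extend in each column); it says nothing about the upper-right corner. Already for $s=0$ the matrix $A_0=\widehat U-\B e_1\B{\widehat p}^H$ has its entire first row generically full, so $A_0(1{:}2j,\,2(j+1)+1{:}n)$ does not vanish. Indeed the whole point of the theorem is that this upper-right piece of $A_s$ (and of $U_s$) carries nontrivial rank-one information, not zeros; so the decomposition $U_s=A_s+\B z_s\B w_s^H$ cannot by itself yield the bound.

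Your second argument --- invert $A_s$ via Sherman--Morrison to obtain $U_s=A_s^{-H}+B_s$, then use the nullity theorem on $A_s$ (exploiting its lower staircase together with the rank-one $2\times 2$ subblocks, exactly as in the proof of Lemma~\ref{0shift}) to conclude $A_s^{-H}(1{:}2j,\,2(j+1)+1{:}n)=0$ --- is correct, and it is in fact the paper's proof in slightly different clothing. The paper writes $A_s=QR$ and arrives at $U_s=QR^{-H}+B_s$; since $Q$ is unitary one has $QR^{-H}=A_s^{-H}$, so this is your identity. The paper then invokes Lemma~\ref{0shift} to get the zero block of $Q$, hence of $QR^{-H}$; but the proof of Lemma~\ref{0shift} is precisely the nullity-theorem computation you outline, applied to $Q$ rather than to $A_s$, and the relevant lower-block ranks of $Q$ and $A_s$ coincide because $R$ is upper triangular and invertible. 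So once you discard the flawed first paragraph, what remains is essentially the paper's argument, and it already delivers both the explicit identification and the rank bound in one stroke (with Remark~\ref{rem1} handling the singular limit, as you correctly note).
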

\begin{proof}
Let $A_s=QR$ be a QR factorization of the matrix $A_s$ assumed invertible.  From 
\[
Q^H A = Q^H (U_s  -\B z_s\B w_s^H) =Q^H U_s- Q^H\B z_s\B w_s^H=R 
\]
we obtain  that 
\[
(Q^H A)^{-H}=Q^H (U_s  -\B z_s\B w_s^H)^{-H}=R^{-H}.
\]
Using the Sherman--Morrison formula \cite{MC}   yields 
\[
Q^H( U_s +\frac{U_s\B w_s \B z_sU_s }{1-\B z_s^H U_s^H \B w_s})=R^{-H},
\]
which gives
\[
U_s=Q R+\B z_s\B w_s^H= Q R^{-H} - \frac{U_s\B w_s \B z_sU_s }{1-\B z_s^H U_s^H \B w_s}.
\]
Since $R^{-H}$ is upper triangular we have that $Q R^{-H}$ has the same upper staircase shape as 
$Q$ and, therefore, from Lemma \ref{0shift} we conclude that 
\[
\rank(U_s(1:2j, 2(j+1)+1:n))\leq 1, \quad 1\leq j\leq \lfloor \frac{n+1}{2}\rfloor-2, \ s\geq 0.
\]
The argument stated in Remark \ref{rem1} extends
this property to  a possibly singular $A_0$ and a fortiori $A_s$, $s\geq 0$. 
\end{proof}

\begin{remark}\label{rem2}
It is worth pointing out that although the  rank  structure of $U_s$ is closed in the limit its 
parametrization via generators is not \cite{note}.  This means that the rank one  representation 
of the entries of $U_s$ located 
 in the upper triangular portion does not hold  in the general case where  the starting matrix $A_0$ 
can be singular. 
\end{remark}

From the previous theorem we derive a  structural representation of each matrix $A_s$, $s\geq 0$, generated 
under the QR process \eqref{qrgeneral} applied to $A_0=\widehat C$  given as in \eqref{ones}.  In the next section 
we provide a  fast adaptation of this process based on the relations \eqref{base0},\eqref{base1}, and \eqref{base2}.

\section{Fast Algorithms and Numerical Results}
\setcounter{equation}{0}

In this section we devise a fast adaptation of the QR iteration \eqref{qrgeneral} applied to a starting  invertible 
matrix $A_0=\widehat C\in \mathbb C^{n\times n}$  given as in \eqref{ones} by using the structural properties described above. 
Let us first observe that each matrix $A_s$, $s\geq 0$, generated by \eqref{qrgeneral} can be represented by means of 
the following sparse data set of size $O(n)$:
\begin{enumerate}
\item the nonzero entries of the banded matrix $\widehat A_s\in \mathbb C^{n\times n}$  obtained from $A_s$  according to 
\[
\widehat A_s=(\hat a_{i,j}^{(s)}), \quad \hat a_{i,j}^{(s)}=
\left\{\begin{array}{ll}
0, \  {\rm if} \ j\geq 2\lfloor \frac{i+1}{2}\rfloor+3, \ 1\leq i\leq 2\lfloor \frac{n+1}{2}\rfloor-4; \\
a_{i,j}^{(s)}, \ {\rm elsewhere}; \end{array}\right.
\]
\item  the vectors $\B z_s=(z_i^{(s)}), \B w_s=(w_i^{(s)})\in \mathbb C^n$ and $\B f_s\colon=U_s \B w_s, \B f_s=(f_i^{(s)})$, 
and  
$\B g_s\colon=U_s^H \B z_s, \B g_s=(g_i^{(s)})$. 
\end{enumerate}
The nonzero pattern of the matrix $\widehat A_s$  looks as below: 
\[
\widehat A_s=\left[\begin{array}{cccccccc}
\star & \star & \star & \star\\
\star & \star & \star &\star  \\
& \star & \star & \star & \star &\star\\
& \star & \star & \star & \star &\star\\
& & &  \star & \star & \star & \star &\star\\
&  & & \star & \star & \star & \star &\star\\
& & &  & &  \star & \star & \star  \\
&  & &  & & \star & \star & \star \\
\end{array}\right], \quad \quad  (n=2k),
\]
or 
\[
\widehat A_s=\left[\begin{array}{ccccccc}
\star & \star & \star& \star\\
\star & \star & \star  & \star\\
& \star & \star & \star & \star& \star\\
& \star & \star & \star & \star &\star\\
& & &  \star & \star & \star & \star\\
&  & & \star & \star & \star & \star\\
& & &  &  & \star & \star \\
\end{array}\right], \quad  \quad (n=2k-1).
\]
From \eqref{base0} and \eqref{base2} we find that the entries of the matrix $A_s=(a_{i,j}^{(s)})$ can be 
expressed in terms of elements of this data set as follows: 
\begin{equation}\label{fastrep}
 a_{i,j}^{(s)}=
\left\{\begin{array}{ll}
-\sigma^{-1} f_i^{(s)}\bar{g_j}^{(s)}- z_i^{(s)}\bar{w_j}^{(s)}, \  {\rm if} \ j\geq 2\lfloor \frac{i+1}{2}\rfloor+3, \ 1\leq i\leq 2\lfloor \frac{n+1}{2}\rfloor-4; \\
\widehat a_{i,j}^{(s)}, \ {\rm elsewhere}; \end{array}\right.
\end{equation}
 where $\sigma=1-\B z_s^H U_s^H \B w_s=1-\B z_0^H U_0^H \B w_0 =1-\B e_3^H \B p=1-1-\displaystyle\frac{\bar p_0}{\bar p_n}=-
\displaystyle\frac{\bar p_0}{\bar p_n}$.
The next procedure performs a  structured variant of the QR iteration \eqref{qrgeneral} applied 
to an initial matrix $A_0=\widehat C\in \mathbb C^{n\times n}$  given as in \eqref{ones}.

\medskip
\framebox{\parbox{8.0cm}{
\begin{code1}
{\bf Procedure} {\bf Fast\_QR }\\
{\bf Input}: $\widehat A_s$, $\sigma$, $\B z_s$, $\B w_s$, $\B f_s$, $\B g_s$;\\
{\bf Output}: $\widehat A_{s+1}$, $\sigma$, $\B z_{s+1}$, $\B w_{s+1}$, $\B f_{s+1}$, $\B g_{s+1}$; \\
\quad  1. Compute the shift $\rho_s$. \\
\quad  2. Find the factored form  \eqref{qrep} of the matrix $Q_s$  such that\\ 
\quad \quad \quad \quad 
$
Q_s^H(A_s-\rho_s I)=R_s, \quad R_s \ {\rm upper} \ {\rm triangular},
$
\\
 \quad  \quad where $A_s$ is represented via \eqref{fastrep}. \\
\quad 3.  Determine  $\widehat A_{s+1}$ from the entries of $A_{s+1}=Q_s^H A_s Q_s$.\\
\quad 4. Evaluate  $\B z_{s+1}=Q_s^H\B z_s$, $\B w_{s+1}=Q_s^H\B w_s$,  $\B f_{s+1}=Q_s^H\B f_s$,  
$\B g_{s+1}=Q_s^H\B g_s$.
\end{code1}}}
\bigskip

The factored form of $Q_s$ makes it possible to execute the steps $2,3$ and $4$ simultaneously by improving the 
efficiency of computation. The matrix $A_s$  is represented by means of four vectors and a 
diagonally structured matrix  $\widehat A_s$  encompassing the band profile of $A_s$.  This matrix  
could be stored in a 
rectangular array  but for the sake of simplicity in our implementation 
we adopt the MatLab\footnote{Matlab is a registered trademark of The Mathworks, Inc..}  sparse matrix format.
Due to the occurrences of deflations the QR process is applied to a principal submatrix of $A_s$ starting at 
position $pst+1$ and ending at position $n-qst$, where $pst=qst=0$ at beginning. 
At the core of ${\bf Fast\_QR }$ there is a  structured  adaptation of the  QR iteration  
applied to 
$B=A_s(pst+1:n-qst,pst+1:n-qs)-\rho_s\, I_{n-pst-qst}$. In particular, we compute the Givens reflector $\mathcal G_1(\bar \gamma_1, \bar \sigma_1)$ of equation~\eqref{qrep} based on the shit $\rho_s$ computed in step 1, and we perform the similarity transformation
$$
B_1=\mathcal G_1(\bar \gamma_1, \bar \sigma_1)^H B\,  \mathcal G_1(\bar \gamma_1, \bar \sigma_1).
$$
This is done by using only the representation of $B$, that is the portion of the four vectors $\B f$, $\B g$, $\B z$, $\B w$ with indices between $pst+1$ and $n-qst$ and $\widehat A_s(pst+1:n-qst,pst+1:n-qs)$,  and acting only on the first two rows  and columns of them. 

Then, defining $ndim=n-pst-qst$ the dimension of $B$ and for $\ell=2:2: 2*(\lfloor(ndim+1)/2\rfloor -2)$ we compute the matrices $ \mathcal G_{\ell,\ell+1}$ as the unitary factor of a $QR$ factorization of the $3\times 3$ diagonal blocks $B_1(\ell:\ell+2, \ell:\ell+2)$, and updating $B_1$ as follows
$$
B_1=\mathcal G_{\ell,\ell+1}^H B_1\,  \mathcal G_{\ell,\ell+1}.
$$

As in equation~\eqref{qrep}, the last unitary transformation $\mathcal G_{n-1}$ is computed in a different way  in the odd and in the even case.

Despite the simplicity of this scheme we have to  deal carefully with the representation of $B$ in order to update the banded matrix and the four generators.

The computation of the shift $\rho_s$  at the first step of ${\bf Fast\_QR }$ can be  carried out by 
several strategies \cite{MC}. In our implementation we employ the Wilkinson idea  by choosing as a shift one of
the roots of the trailing 2-by-2 submatrix of $A_s(pst+1:n-qst,pst+1:n-qs)$
(the one closest to the final entry). For an input companion matrix  expressed as a rank--one correction 
of a unitary  CMV--like shaped matrix   this technique ensures zero shifting at the early iterations. It has been 
observed experimentally that this fact is important for the correct fill in  both in the rank--two structure in the 
 upper triangular part  and in the band profile of $A_s$.
Incorporating the Wilkinson shifting within the explicit shifted $QR$ 
method  ${\bf Fast\_QR}$ and implementing a step of $QR$ iteration on the representation as just described, yields our proposed  fast CMV--based eigensolver for companion matrices. The algorithm has 
been implemented in MatLab and tested on several examples. 
This implementation can be obtained from the authors upon request.

In order to check the accuracy of the  output 
we compare the computed approximations 
 with the ones returned by the internal function {\tt eig}  applied to the initial companion matrix 
$C=C(p)\in \mathbb C^{n\times n}$ 
without the balance option. Specifically, we match the two lists  of approximations and then  find the 
average absolute error $err=\sum_{j=1}^n err_j/n$.

For a backward stable algorithm in the light of the classical perturbation results for 
eigenvalue computation \cite{MC} we know that  this error  would be of the order of
$\|\Delta C \|_\infty \,\mathcal K_\infty(V) \, {\tt \varepsilon}$, where $\|\Delta C \|_\infty$ is the backward error, 
$\mathcal K_\infty(V) =\| V \|_\infty  \cdot \|V^{-1}\|_\infty$ is the 
condition number of $V$,  the eigenvector matrix  of $C$
and ${\tt \varepsilon}$ denotes the machine precision.  A backward stability analysis  of the customary 
QR eigenvalue algorithm is performed in  \cite{Tisse} by showing that $\|\Delta C \|_F\leq c n^3 \| C \|_F$ 
for a small integer constant $c$.  A partial extension of this result to certain 
fast adaptations of the QR algorithm for rank--structured matrices is provided in \cite{EGGnew} 
by replacing $\| C \|_F$ with a measure of the magnitude of the generators. The 
numerical experience reported in \cite{last}
further support this extension.   In the present case we find that 
\begin{eqnarray*} 
\|C \|_\infty=\|A_0\|_\infty&\le& \|\widehat A_0\|_\infty+\|\sigma^{-1} \B f_0\|_\infty\| \B g_0\|_\infty +\| \B w_0\|_\infty\| \B z_0\|_\infty\\ \label{magg}
&=& \|\widehat A_0\|_\infty+\|\sigma^{-1} \B f_0\|_\infty+\| \B w_0\|_\infty.
\end{eqnarray*}
The parameter  $\sigma^{-1}=-{\bar p_n}/{\bar p_0}$  in the starting  representation  
via generators should be incorporated into the vector $\B f_0$, leading 
 to a vector whose entries depend on the ratios 
$\pm p_j/p_0$. Viceversa, the entries of vector $\B w_0$, depend on the ratios $\pm p_j/p_n$. 
When the coefficients of the polynomial $p(z)$ are unbalanced, to keep trace of the possible 
unbalanced entries of both $\B f_0$ or $\B w_0$, we may consider  the maximum expected error as 
$nne=\left( \|\widehat A_0\|_\infty+\|\sigma^{-1} \B f_0\|_\infty+\| \B w_0\|_\infty\right)\, \mathcal K_\infty(V) \,{\tt \varepsilon}$. Our implementation reports as output the value of 
$werr=err/nne$. In accordance with our claim this quantity  would be bounded by a small multiple of 
$n^3$.

As a measure of efficiency of the algorithm  we also determine 
the  average number  $averit$ of QR steps per eigenvalue.

We have performed many numerical experiments with real polynomials of both small and large degree.  Moreover, to 
support our expectation about  roundoff errors  we consider several cases where the  input polynomial 
is (anti)palindromic  in such a  way that $\|\sigma^{-1}\B f_0\|_\infty= \| \B w_0\|_\infty $. Our test suite consists of the following polynomials: 
\begin{itemize}
\item (P1) $p(z)=1+ (\frac{n}{n+1}+\frac{n+1}{n})z^n +z^{2n}$ \cite{BFGM}. The zeros can be explicitly determined and 
 lie on two circles centered at the origin that are poorly separated.
\item (P2) $p(z)=\frac{1}{n} \left(\sum_{j=0}^{n-1} (n+j)z^j  +(n+1)z^n+\sum_{j=0}^{n-1} (n+j)z^{2n-j}\right)$ \cite{BH}. 
This is another test problem for spectral factorization algorithms. 
\item (P3) $p(z)=(1-\lambda)z^{n+1} -(\lambda+1)z^n +(\lambda+1)z -(1-\lambda)$ \cite{AGDP}.  
This family of antipalindromic 
polynomials arises  in the context of 
a boundary--value problem  whose eigenvalues coincides with the zeros of an entire function related with $p(z)$.
\item (P4) A collection of small--degree polynomials \cite{TT}:
\begin{enumerate}
\item the Bernoulli polynomial 
$p(z)=\sum_{j=0}^n\left(\begin{array}{cc}n\\j\end{array}\right)b_{n-j}z^j$, where $b_j$ are the Bernoulli numbers;
\item the Chebyshev polynomial of first kind; 
\item  the partial sum of the exponential $p(z)=\sum_{j=0}^n  (2z)^j/{j!}$.
\end{enumerate}
\item(P5) Polynomials $p(z)=\sum_{j=0}^np_jz^j$ with coefficients of the form $p_j=a_{j}\times 10^{e_{j}}$, 
where $a_{j}$ and $e_{j}$ are drawn from the uniform distribution in $[-1,1]$ and $[-3,3]$, 
respectively. These polynomials were proposed in \cite{JT} for testing purposes. 
\item (P6) The symmetrized version of the previous polynomials, that is, $p(z)=s(z)s(z^{-1})z^n$  where 
$s(z)=\sum_{j=0}^ns_jz^j$ with coefficients of the form $s_j=a_{j}\times 10^{e_{j}}$ and  $a_{j}\in [-1,1]$ and 
$e_j\in [-3,3]$.
\end{itemize}

Table \ref{table1} shows the numerical 
results for the first three sets of symmetric polynomials.  For the sake of illustration in 
Figure \ref{f1} and \ref{f2} we also  display the distribution of the zeros computed by our 
routine and the MatLab function {\tt eig} applied to polynomials in the class $P2$ and $P3$, respectively.

\begin{table}
\centering
\begin{tabular}{ccccccccc}
\toprule
Test Set Number&  $n$    & $nne/{\tt \varepsilon}$ & $err$ & $werr$ &  $averit$   \\ 
\midrule
\multirow{5}*{$P1$}  
& 64 &  4.14e+04& 4.12e-14 & 7.47e-03 &  4.55  \\ 
& 128 &  1.65e+05& 1.16e-13 & 5.29e-03 &  4.53  \\ 
& 256 &  6.57e+05 &2.84e-13 & 3.24e-03 &  4.51  \\ 
& 512 &  2.62e+06 &8.87e-13 & 2.54e-03 &  4.51  \\
& 1024 & 1.05e+07 &2.61e-12 & 1.87e-03 & 4.51 \\ 
\midrule
\multirow{5}*{$P2$}
& 64 &  2.36e+05 &3.94e-12 &  7.52e-02 & 3.66  \\ 
& 128 & 1.62e+06 &1.03e-10 &  2.86e-01 & 3.44  \\ 
& 256 & 1.13e+07 &1.21e-09 &  4.84e-01 & 3.23  \\ 
& 512 & 8.01e+07 &2.73e-08 &   1.53e+00 & 3.06  \\ 
& 1024 &  5.77e+08& 5.45e-06 & 4.26e+01 & 2.97  \\
\midrule
\multirow{5}*{$P3 (\lambda=0.9)$}
& 64 &  1.10e+04 &4.12e-15 &  1.69e-03 & 2.94  \\ 
& 128 & 2.20e+04 &1.07e-14 & 2.18e-03 & 2.67  \\ 
& 256 &4.41e+04 &2.83e-14 &  2.88e-03 & 2.57  \\ 
& 512 & 8.83e+04 &3.83e-14 & 1.96e-03 & 2.53  \\ 
& 1024  & 1.77e+05& 4.19e-14 & 1.07e-03 & 2.51 \\ 
\midrule
\multirow{5}*{$P3 (\lambda=0.999)$}
& 64  & 1.08e+06 &6.48e-15 &  2.71e-05 & 3.03  \\ 
& 128  & 2.16e+06& 9.97e-15 & 2.08e-05 & 2.71\\ 
& 256  & 4.34e+06 &2.50e-14 & 2.59e-05 & 2.58\\ 
& 512  & 8.68e+06 &3.66e-14 & 1.90e-05 & 2.54 \\ 
& 1024 & 1.74e+07 &4.25e-14 & 1.10e-05 & 2.52  \\ 
\bottomrule
\end{tabular}
\caption{\label{table1} Numerical results for the sets $P1$, $P2$ and $P3$ of (anti)palindromic polynomials}
\end{table}

\begin{figure}
\centering
\includegraphics[scale=0.4]{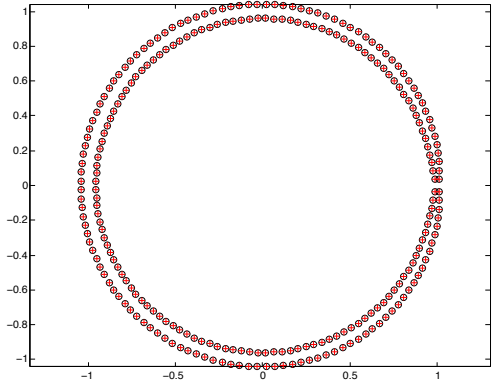}
\caption{Distribution of the zeros computed by our routine (red plus) and {\tt eig} (black circles) for 
the polynomial in the class  $P2$ of degree $n=128$.}
\label{f1}
\end{figure}

\begin{figure}
\centering
    \subfigure[]{\includegraphics[scale=0.3]{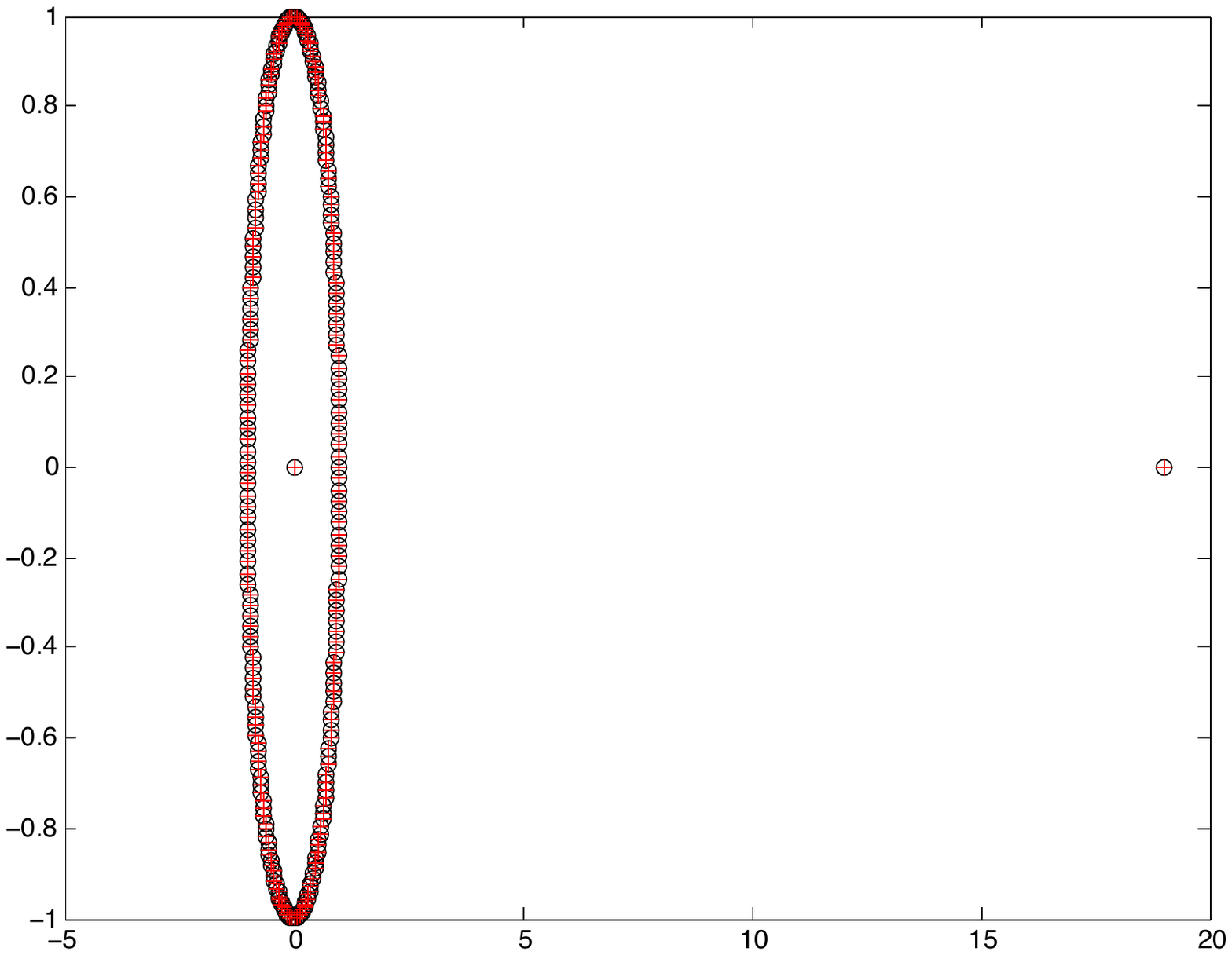}}\hspace{2cm}
    \subfigure[]{\includegraphics[scale=0.3]{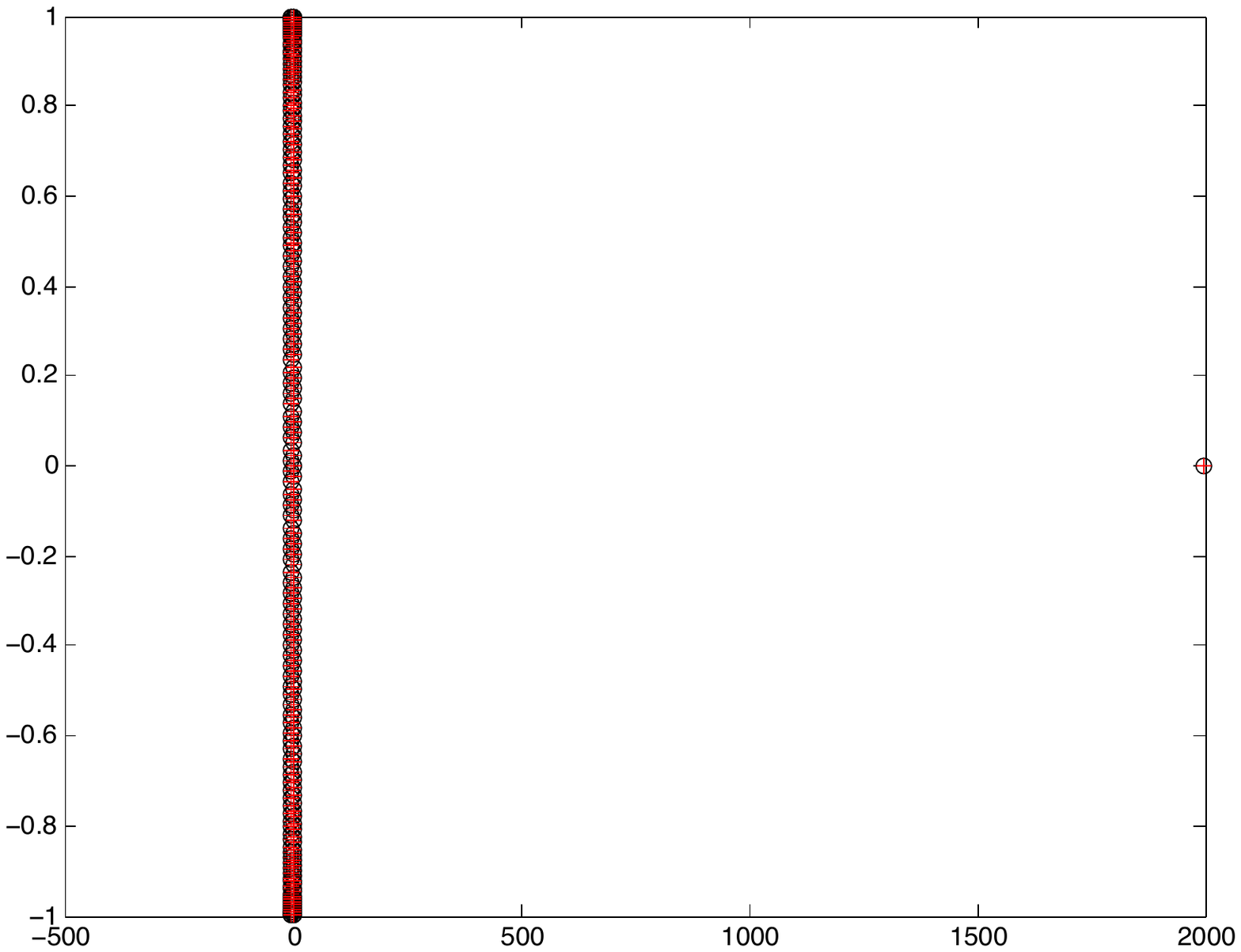}} 

  \caption{Distribution of the zeros computed by our routine (red plus)  and {\tt eig} (black circles) for 
the polynomials in  the class $P3$ of degree $n=128$ with $\lambda\in \{0.9,0.999\}$. \label{f2}}
 \end{figure}

A certain degeneration of the accuracy of computed results can be observed in example $P2$ in Table \ref{table1}, 
but this is within the bounds provided by the backward error analysis. 

Table \ref{table2} shows the numerical 
results for the small degree polynomials $P4$.  For the sake of illustration in 
Figure \ref{f3} and \ref{f4} we also  display the distribution of the zeros computed by our 
routine and the MatLab function {\tt eig} applied to polynomials in the class $P4(1-2)$ and $P4(3)$, respectively.

\begin{table}
\centering
\begin{tabular}{ccccccccc}
\toprule
Test Set Number&  $n$ &   $nne/{\tt \varepsilon}$ & $err$  & $werr$&
 $averit$   \\ 
\midrule
\multirow{5}*{$P4(1)$}  
& 10 &  5.01e+05 &2.75e-14 & 2.47e-04 & 3.50 \\ 
& 20 &  1.34e+13 &2.47e-13 & 8.31e-11 & 3.50 \\ 
& 30 &  5.94e+25 &2.02e-12 & 1.53e-22 & 3.77 \\ 
\midrule
\multirow{5}*{$P4(2)$}
& 10 &  4.82e+06 &5.34e-12 &  4.99e-03 & 3.40  \\ 
& 20 &  1.69e+14 &3.52e-05 &  9.41e-04 & 3.40  \\ 
& 30 &  6.27e+21 &1.89e-01 &  1.36e-07 & 4.03  \\ 
\midrule
\multirow{5}*{$P4(3)$}
& 10&  2.93e+08 &3.12e-14 & 4.79e-07 & 3.20 \\ 
& 20 & 9.83e+25 &1.27e-11 & 5.81e-22 & 3.35 \\ 
& 30 & 8.49e+47 &3.77e-08 & 2.00e-40 & 3.30\\ 
\bottomrule
\end{tabular}
\caption{\label{table2} Numerical results for the sets $P4(1-3)$.}
\end{table}

\begin{figure}
\centering
    \subfigure[]{\includegraphics[scale=0.369]{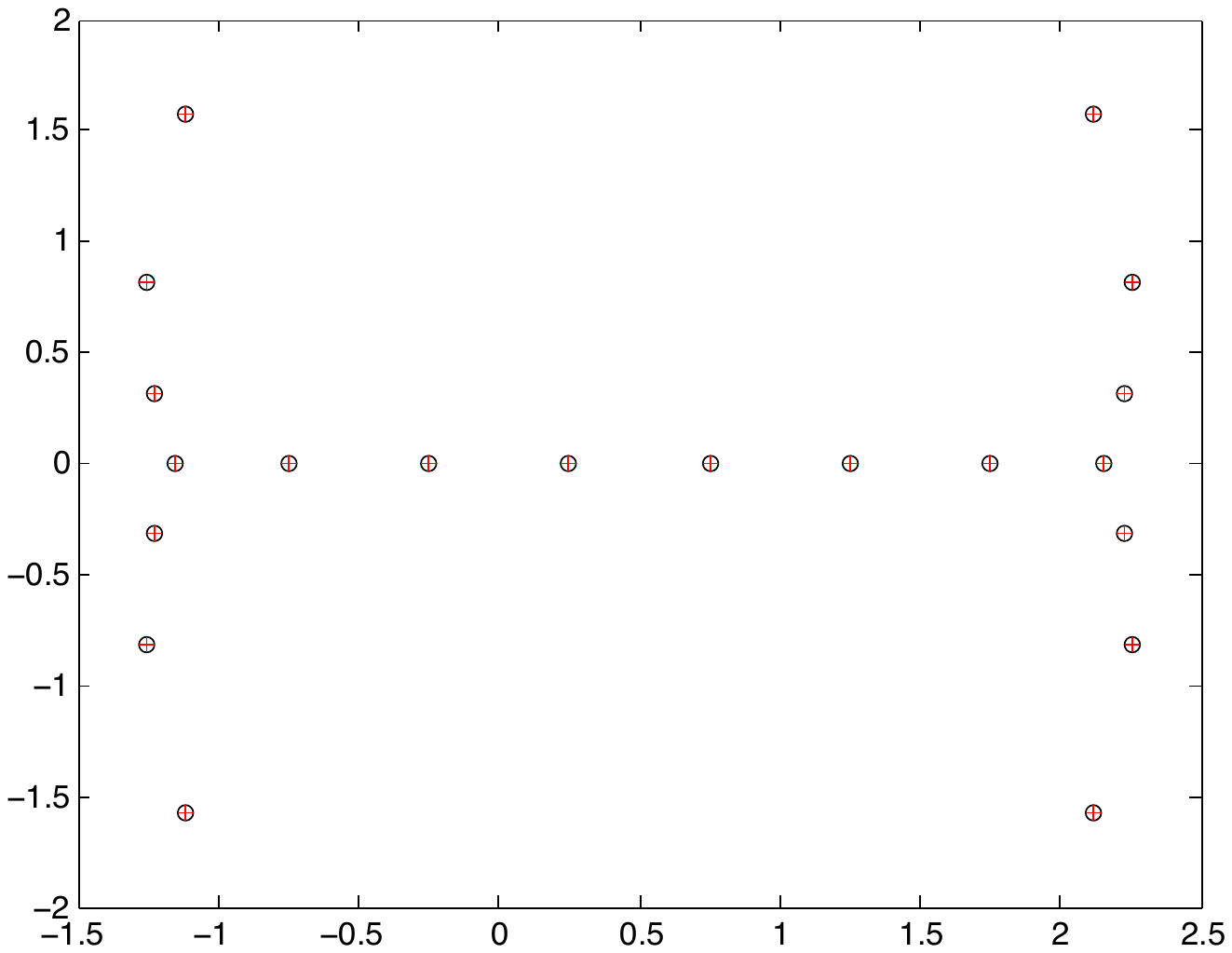}}\hspace{2cm}
    \subfigure[]{\includegraphics[scale=0.3]{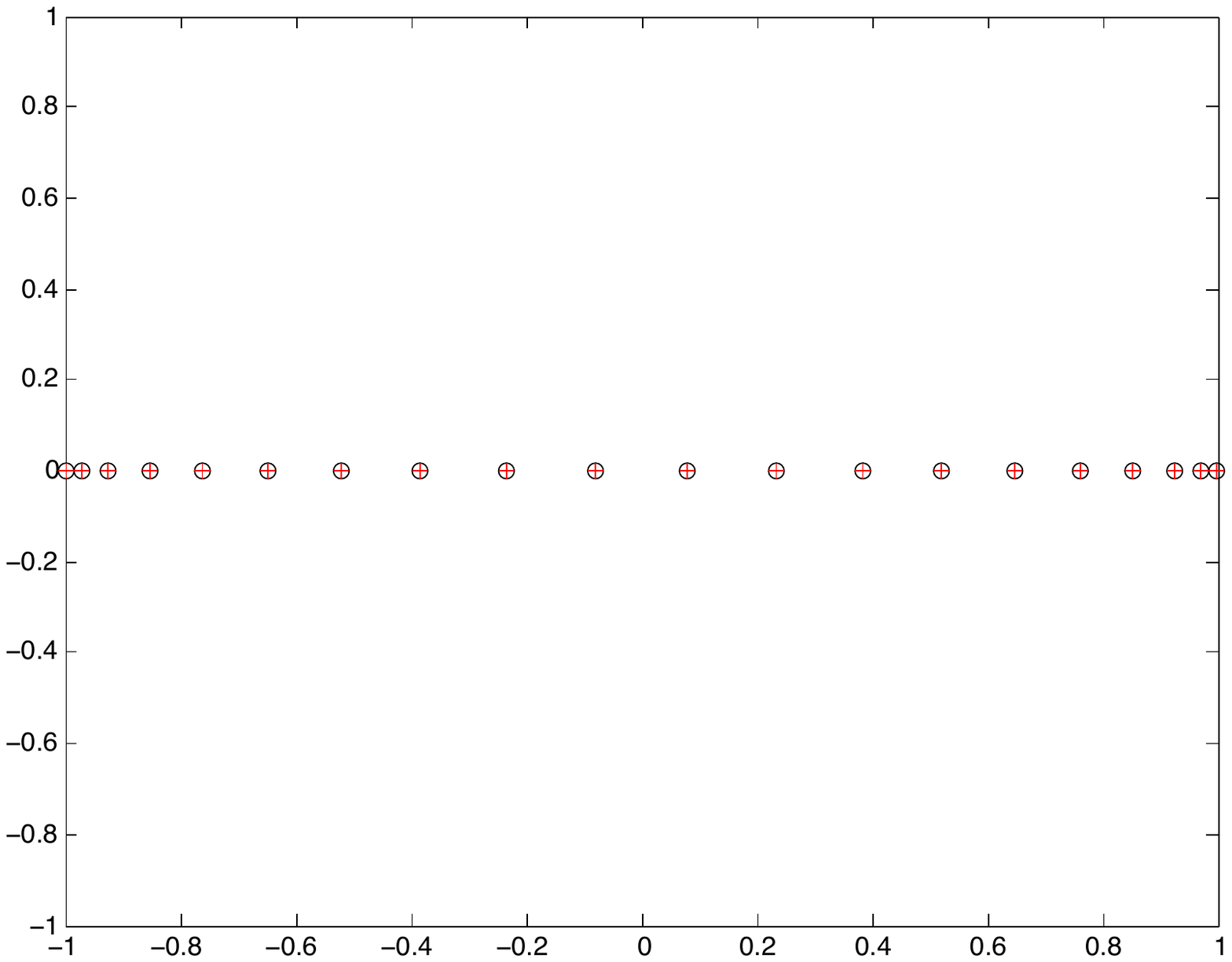}} 
  \caption{Distribution of the zeros of Bernoulli and Chebyshev polynomial of degree $20$ 
computed by our routine (green diamonds) and {\tt eig} (red circles).  \label{f3}}

\end{figure}
 
It is worth pointing out the loss of information in the Chebyshev case due to the usage of 
generators depending on the normalization for both the leading and the trailing coefficient of the 
polynomial.  This is a  potential drawback of our approach.  

\begin{figure}
\centering
    \subfigure[]{\includegraphics[scale=0.3]{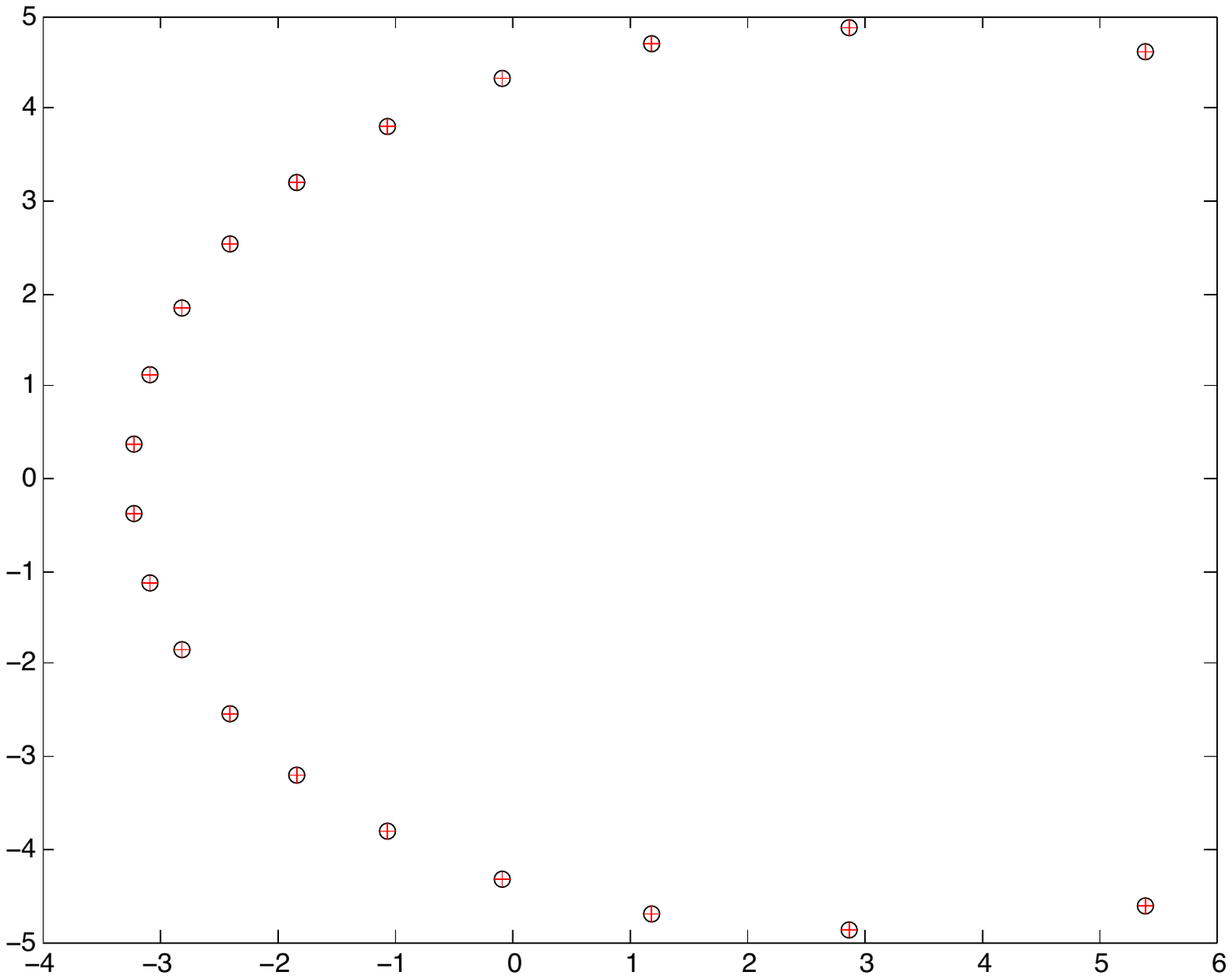}}\hspace{2cm}
    \subfigure[]{\includegraphics[scale=0.3]{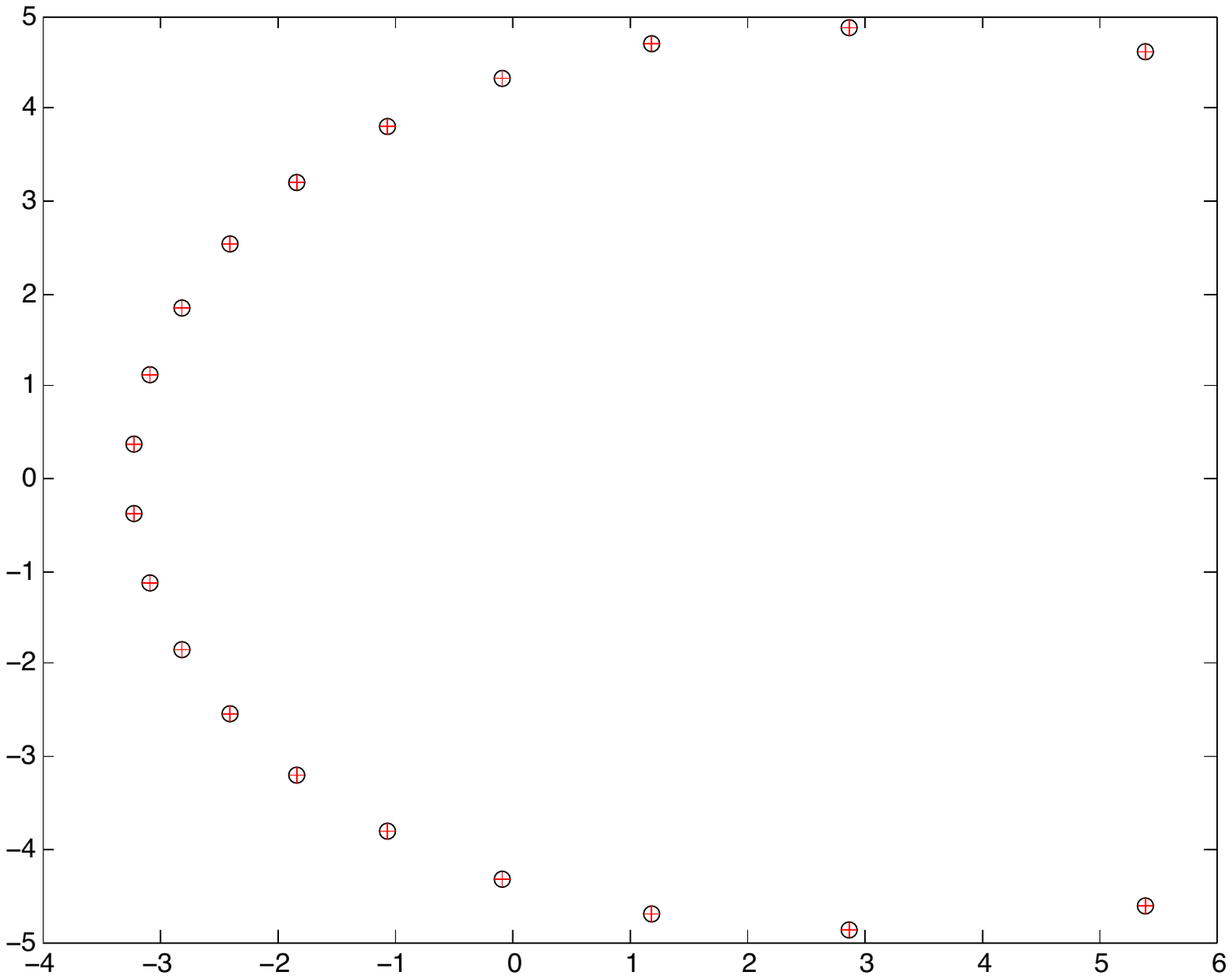}} 
  \caption{Distribution of the zeros of truncated Taylor series of $e^{\displaystyle{2 z}}$  of degree $20$ 
 and $30$ computed by our routine (green diamonds) and {\tt eig} (red circles).  \label{f4}}

\end{figure}

Table \ref{table3} finally gives the  numerical 
results for the  polynomials $P5$ and $P6$. Here we report for $nne/{\tt \varepsilon}$ the min/max range 
 and for the other columns the maximum value 
 of the data output 
variables over fifty experiments. 

\begin{table}
\centering
\begin{tabular}{ccccccccc}
\toprule
Test Set Number&  $n$ &  $nne/{\tt \varepsilon}$ & $err$ &  $werr$&
 $averit$   \\ 
\midrule
\multirow{5}*{$P5$}  
& 32 &  4.76e+05 - 1.49e+20 & 7.50e-03&  1.94e-01 & 3.67 \\ 
& 64 &  2.87e+03 - 3.66e+19 & 5.33e-04&  2.40e-03 & 3.65 \\ 
& 128 &  9.90e+04 - 7.43e+19 & 4.48e-03&  1.47e-01 & 3.41 \\
\midrule
\multirow{5}*{$P6$}
& 16 &  2.55e+03 - 5.47e+19 & 1.71e-02 & 6.16e-03 & 3.53\\
 & 32 & 7.64e+04 - 2.49e+22 & 1.34e-02 & 9.48e-03 & 3.61\\ 
& 64 &  1.08e+06 - 2.13e+20 & 4.76e-02  & 1.51e-02 & 3.42\\ 
& 128 &  1.46e+07 - 6.96e+23 & 1.40e-01 &  8.71e+00 & 3.33\\ 
\bottomrule
\end{tabular}
\caption{\label{table3} Numerical results for the sets $P5$, $P6$.}
\end{table}

\section{Conclusion and Future Work}
\setcounter{equation}{0}
In this paper we have presented a novel fast  QR--based  eigensolver for companion matrices 
exploiting the structured technology for CMV--like representations.  To our knowledge this is the first 
numerically reliable fast adaptation of the QR algorithm for perturbed unitary matrices which 
makes use of only four  vectors to express the rank structure of the matrices generated
 under the iterative process. As a result, we obtain a data sparse parametrization of these matrices 
which 
at the same time is able to capture the
structural properties of the matrices and yet  to be sufficiently easy to manipulate and update
 for computations. Although very promising, some numerical issues
 associated with the proposed approach are still under investigation. The first one is a certain 
sensibility of the algorithm in the initial  steps where the  band profile of the matrix is filled 
using the information propagated  from the polynomial coefficients.   The second issue is 
concerned with the magnitude of the generator vectors 
depending on the normalization for both the leading and the trailing coefficient of the 
polynomial. Both these problems can be circumvented by using different representations of the 
rank--two structure. 
Finding the right balance between robustness and efficiency is the main subject of our current research.

\bibliographystyle{plain}

\end{document}